\newtheorem{theorem}{Theorem}[section]
\newtheorem{remark}{Remark}[section]
\newcommand{\mcU}{\mathcal{U}}
\newcommand{\mcD}{\mathcal{D}}
\newcommand{\mcG}{\mathcal{G}}
\newcommand{\mcX}{\mathcal{X}}
\newcommand{\mcN}{\mathcal{N}}
\newcommand{\mcL}{\mathcal{L}}
\newcommand{\mbR}{\mathbb{R}}
\newcommand{\mbRd}{{\mathbb{R}^d}}
\newcommand{\mbRn}{{\mathbb{R}^n}}
\newcommand{\omg}{{\Omega}}
\newcommand{\omgi}{{\Omega_I}}
\newcommand{\omgn}{{\Omega_N}}
\newcommand{\omgd}{{\Omega_D}}
\newcommand{\omgomgi}{{\Omega\cup\Omega_I}}
\newcommand{\oomg}{{\overline\Omega}}
\newcommand{\ds}{\displaystyle}
\def \alphab{{\boldsymbol\alpha}}
\def \nub{{\boldsymbol \nu}}
\def \xb{\bm{x}}
\def \yb{\bm{y}}
\def \veps{\varepsilon}
\def \sl{{s_l}}
\def \ul{{u_l}}
\def \un{{u_n}}
\def \gn{{g_n}}
\def \wun{{\widetilde u_n}}
\def \wgn{{\widetilde g_n}}
\title{A physically-consistent, flexible and efficient strategy to convert local boundary conditions into nonlocal volume constraints}
\begin{document}
\maketitle

\vspace{-1cm}
\noindent \textsf{M. D'Elia}
\textsf{\textit{Sandia National Laboratories, Albuquerque, NM}}\\
\textsf{X. Tian,} \textsf{\textit{University of Texas, Austin, TX}}\\
\textsf{Y. Yu,} \textsf{\textit{Lehigh University, Bethlehem, PA}}

\vspace{.5cm}
\noindent
\textsf{\textbf{Abstract.}}

\noindent Nonlocal models provide exceptional simulation fidelity for a broad spectrum of scientific and engineering applications. However, wider deployment of nonlocal models is hindered by several modeling and numerical challenges. Among those, we focus on the nontrivial prescription of nonlocal boundary conditions, or volume constraints, that must be provided on a layer surrounding the domain where the nonlocal equations are posed. The challenge arises from the fact that, in general, data are provided on surfaces (as opposed to volumes) in the form of force or pressure data. In this paper we introduce an efficient, flexible and physically consistent technique for an automatic conversion of surface (local) data into volumetric data that does not have any constraints on the geometry of the domain and on the regularity of the nonlocal solution and that is not tied to any discretization.
We show that our formulation is well-posed and that the limit of the nonlocal solution, as the nonlocality vanishes, is the local solution corresponding to the available surface data. Quadratic convergence rates are proved for the strong energy and $L^2$ convergence. We illustrate the theory with one dimensional numerical tests whose results provide the ground work for realistic simulations. 

\vspace{.5cm}
\noindent
\textsf{\textbf{Keywords.}} Nonlocal models, nonlocal diffusion, nonlocal boundary conditions, convergence to local limits, asymptotic behavior of solutions.

\vspace{.5cm}
\noindent
\textsf{\textbf{AMS subject classifications.}} 34B10, 45A05, 45K05, 26A33, 35B40, 76R50.

\section{Introduction and motivation}\label{sec:introduction}
Nonlocal models employ integral rather than differential operators which allows them to relax the regularity constraints of partial differential equations (PDEs) and to capture effects arising from long-range forces at the microscale and mesoscale, not accounted for by PDEs. Consequently, nonlocal models provide exceptional simulation fidelity for a broad spectrum of applications such as 
fracture mechanics \cite{Ha2011,Littlewood2010,Silling2000}, 
anomalous subsurface transport \cite{Benson2000,Schumer2003,Schumer2001}, 
phase transitions \cite{Bates1999,Delgoshaie2015,Fife2003}, 
image processing \cite{Buades2010,Gilboa2007,Gilboa2008,Lou2010}, 
multiscale and multiphysics systems \cite{Alali2012,Askari2008}, 
MHD \cite{Schekochihin2008}, 
and stochastic processes \cite{Burch2014,DElia2017,Meerschaert2012,MeKl00}.

The main difference between PDE models and the nonlocal models we consider is that, in the former case, interactions between two domains only occur due to contact, whereas in the latter case, interactions can occur at a distance. In this work, for simplicity of the exposition and without loss of generality (see Remark \ref{rem:extension}), we consider the nonlocal counterpart of elliptic differential operators. In its simplest form, the action of a nonlocal diffusion operator on a scalar function $u:\mbRn\to\mbR$ is given by
\begin{displaymath}
\mcL u(\xb)=C\int_\mbRd \big(u(\yb)-u(\xb)\big) \,\gamma (\xb,\yb )\,d\yb \qquad  \xb\in\mbRd,
\end{displaymath}
where the {\it kernel} function $\gamma$, usually with bounded support, is related to the specific application and determines the smoothing properties of $\mcL$. The integral form above allows us to catch long-range interactions so that every point in a domain interacts with a neighborhood of points. Also, such form reduces the regularity requirements for the solution, which is able to describe discontinuous (for e.g. fracture mechanics) or anomalous (for e.g. subsurface dispersion) behaviors.

However, the increased accuracy of nonlocal models comes at a price: several modeling and numerical challenges arise. These include the nontrivial prescription of ``nonlocal'' boundary conditions, the often prohibitively expensive numerical solution and the definition of model parameters (such as $\gamma$), often unknown or subject to uncertainty. All these (open) problems can hinder wider deployment of nonlocal models and are the subject of current research in the fast-growing nonlocal community. In this work we focus on the first challenge. 

Because of nonlocal interactions, when solving a nonlocal problem in a bounded domain, the prescription of classical boundary conditions does not guarantee the well-posedness of the equations \cite{Du2012}; in fact, in general, {\it nonlocal} boundary conditions, or, more properly, volume constraints, must be defined on a layer surrounding the domain. However, it is often the case that such information is not available, whereas it is easy to measure surface (local) data. Consequently, one of the biggest challenges to be addressed before nonlocal models can be widely applied in realistic contexts is the {\it conversion of local boundary conditions, defined on surfaces, into volume constraints, defined on volumes}. 

Previous attempts to tackle the conversion are either too expensive (solving an optimization problem) or too restrictive (requiring conditions on geometry or dimensionality).

The first approach is an optimization-based coupling method that mimics generalized overlapping domain-decomposition formulations \cite{DElia2018-P}. The main idea is to decompose the domain into a local and nonlocal subdomains where the former is placed in a neighborhood of the part of the boundary where only surface data are available. This choice allows both the local and nonlocal problems to be well-posed and circumvents the prescription of volume constraints when not available. On the other hand, this method requires the solution of a nonlocal minimization problem whose algorithm may require several computation of the nonlocal solutions, dramatically increasing the computational effort.

Paper \cite{Cortazar2008} is the first that interprets the nonlocal Neumann boundary condition as a body force acting on the boundary layer of the domain, where $L^1$ convergence of nonlocal solutions to the corresponding local ones is shown. Later in \cite{Tao2017}, a careful modification of the body force in a one dimensional setting is found that leads to a second order uniform convergence of solutions as the nonlocal interaction vanishes. The second order convergence result is then extended to two dimensions in \cite{YuNeumann2019}, where the curvature of the computation domain plays an important role in the definition of the modified body force. 
Recently, \cite{DuZhangZheng} achieves the second order uniform convergence in one dimension with another approach. 
To the best of our knowledge, no work has yet discussed second order nonlocal approximations to the local Neumann boundary value problems in space dimension higher than two. The complexity of geometric bodies to be dealt with in high dimension is an obvious hindrance. 

We propose a computationally cheap, flexible and physically consistent method for an efficient conversion that has no constraints on dimensionality, geometry, regularity of the nonlocal solution and that is not tied to any discretization. Our main and most promising approach consists of three simple steps.
\begin{itemize} 
\item[\bf A] Solution of a computationally cheap local model using available surface data.
\item[\bf B] Derivation, from {\bf A}, of forces corresponding to the local solution in the thick nonlocal layer.
\item[\bf C] Solution of the nonlocal model using the forces derived in {\bf B}.
\end{itemize}

\noindent Note that the forces computed in {\bf B} are equivalent to {\it nonlocal} Neumann data, which is used in {\bf C} as volume constraint for the solution of the nonlocal problem. Also note that local and nonlocal problems are completely uncoupled; this feature becomes very powerful when dealing with large scale problems (as it is often the case in engineering applications); in fact, local and nonlocal solvers can be used as black boxes and the overall cost of the proposed method is the same of a nonlocal problem, for given nonlocal boundary data. This is due to the fact that the cost of solving the local problem is negligible compared to the one of the nonlocal problem. Note that the uncoupling of local and nonlocal equations allows for completely independent discretizations of the local and nonlocal equations\footnote{As an example, one can use a mesh-free discretization for the nonlocal models and a mesh-based one for the local model.}. In fact, application of the nonlocal operator to the discretized local solution in step {\bf B} only requires projection of the latter onto the nonlocal discretization space. 
Furthermore, this approach is such that the nonlocal solution computed in {\bf C} reduces to the solution computed in {\bf A}, as the nonlocal interactions vanish, with a quadratic rate of convergence for both the (nonlocal) energy and $L^2$ norms with respect to the characteristic interaction length. 

A few considerations are in order. Even though we do not require additional regularity of the nonlocal solution, we do assume that the given surface data is such that the corresponding local problem computed in {\bf A} is well-posed (for, e.g., the classical Poisson equation square integrability over the boundary of the force/pressure data is enough to guarantee the existence and uniqueness of the local solution). We also mention that in the analysis of the asymptotic behavior of the nonlocal solution for vanishing nonlocality we assume that the local solution belongs to $C^4$. However, this additional regularity is not required in practice.

We expect the proposed strategy to advance the state of the art for predictive nonlocal modeling by providing an efficient in-demand tool that will impact a broad class of applications and unlock the full potential of nonlocal models.

Note that we also introduce an alternative, more straightforward, strategy that has exactly the same properties of the approach described in {\bf A}--{\bf C}, but delivers solutions whose behavior is closer to the local one.

\smallskip The paper is organized as follows. In the following section we introduce the notation and recall relevant results of the nonlocal vector calculus, a theory developed in the last decade by Du et al. \cite{Du2013} that allows one to study nonlocal diffusion problems in a very similar way as PDEs by framing nonlocal equations in a variational setting. In Section \ref{sec:strategy} we introduce two alternative strategies to the conversion problem, discuss their properties, and provide a qualitative comparison. In Section \ref{sec:local-limit} we study the convergence to the local limit of the nonlocal solution for the most promising strategy and show quadratic strong convergence in both the nonlocal energy norm and $L^2$ norm. In Section \ref{sec:numerical-tests} we illustrate the theoretical results in a one-dimensional setting.

\section{Preliminaries}\label{sec:preliminaries}
In this section we introduce the nonlocal vector calculus and recall results relevant to this paper. Let $\omg$ be a bounded open domain in $\mbRd$, $d=1,2,3$, with Lipschitz-continuous boundary $\partial\omg$ and $\alphab(\xb,\yb) \colon \mbRd\times\mbRd\to \mbRd$ be an antisymmetric function, i.e. $\alphab(\yb,\xb)=-\alphab(\xb,\yb)$. For the functions $u(\xb)\colon \mbRd \to \mbR$ and $\nub(\xb,\yb)\colon \mbRd\times\mbRd\to \mbRd$ we define the nonlocal divergence $\mcD\colon \mbRd \to \mbR$ of $\nub(\xb,\yb)$ as

\begin{equation}\label{ndiv}
\mcD\big(\nub\big)(\xb) := \int_{\mbRd} \big(\nub(\xb,\yb)+\nub(\yb,\xb)\big)\cdot\alphab(\xb,\yb)\,d\yb\qquad \xb\in\mbRd
\end{equation} 
and the nonlocal gradient $\mcG\colon  \mbRd\times\mbRd\to\mbRd$ of $u(\xb)$ as 
\begin{equation}\label{ngra}
\mcG\big(u\big)(\xb,\yb) := \big(u(\yb)-u(\xb)\big)  \alphab(\xb,\yb) \qquad \xb,\yb\in\mbRd.
\end{equation}
It is shown in \cite{Du2013} that the adjoint $\mcD^*=-\mcG$. Next, we define the nonlocal diffusion $\mcL\colon \mbRd \to \mbR$ of $u(\xb)$ as a composition of the nonlocal divergence and gradient operators, i.e.
\begin{equation}\label{eq:L}
\mcL u(\xb) :=  \mcD\big(\mcG u\big)(\xb) = 
                2\int_\mbRd \big(u(\yb)-u(\xb)\big) \,\gamma (\xb,\yb )\,d\yb \qquad  \xb\in\mbRd,
\end{equation}
where  $\gamma(\xb,\yb):=\alphab(\xb,\yb)\cdot \alphab(\xb,\yb)$ is a non-negative symmetric kernel\footnote{There are more general representations of the nonlocal diffusion operator, these are associated with nonsymmetric and not necessarily positive kernel functions. In such cases $\mcL$ may define a model for non-symmetric diffusion phenomena, we mention e.g. nonsymmetric jump processes \cite{DElia2017}.}. Note that this is the same operator introduced in Section \ref{sec:introduction}.
We define the interaction domain of an open bounded region $\omg\in\mbRd$ as
\begin{displaymath}
\omgi = \{\yb\in\mbRd\setminus\omg: \; \gamma(\xb,\yb)\neq 0, \; \xb\in \omg\},
\end{displaymath}
and set $\oomg =\omg\cup\omgi$. This domain contains all points outside of $\omg$ that interact with points inside of $\omg$; as such, $\omgi$ is the volume where nonlocal boundary conditions must be prescribed to guarantee the well-posedness of nonlocal euqations (see Section \ref{sec:volume-constrained-problems}).
We make the following assumptions: for $\xb\in\omg$
\begin{displaymath}
\left\{
\begin{array}{ll}
\gamma(\xb,\yb)  >  0 \quad &\forall\, \yb\in B_\varepsilon(\xb)\\[2mm]
\gamma(\xb,\yb)  = 0 \quad &\forall\, \yb\in {\oomg} \setminus B_\varepsilon(\xb),
\end{array}\right.
\end{displaymath}
where $B_\veps(\xb) = \{\yb\in{\oomg}: \; \|\xb-\yb\|<\veps,\; \xb\in \omg\}$ and $\veps$ is the interaction radius or horizon. For such kernels the interaction domain is a layer of thickness $\veps$ that surrounds $\omg$, i.e.
\begin{equation}\label{omgie}
\omgi = \{ \yb\in \mbRd\setminus\omg: \; \|\yb-\xb\|<\varepsilon, \;\xb\in\omg\}.
\end{equation}
We refer to Figure \ref{fig:domains} (left) for an illustration of a two-dimensional domain, the support of $\gamma$ and the induced interaction domain.
\begin{figure}
\centering
\begin{tabular}{cc}
\includegraphics[width=0.35\textwidth]{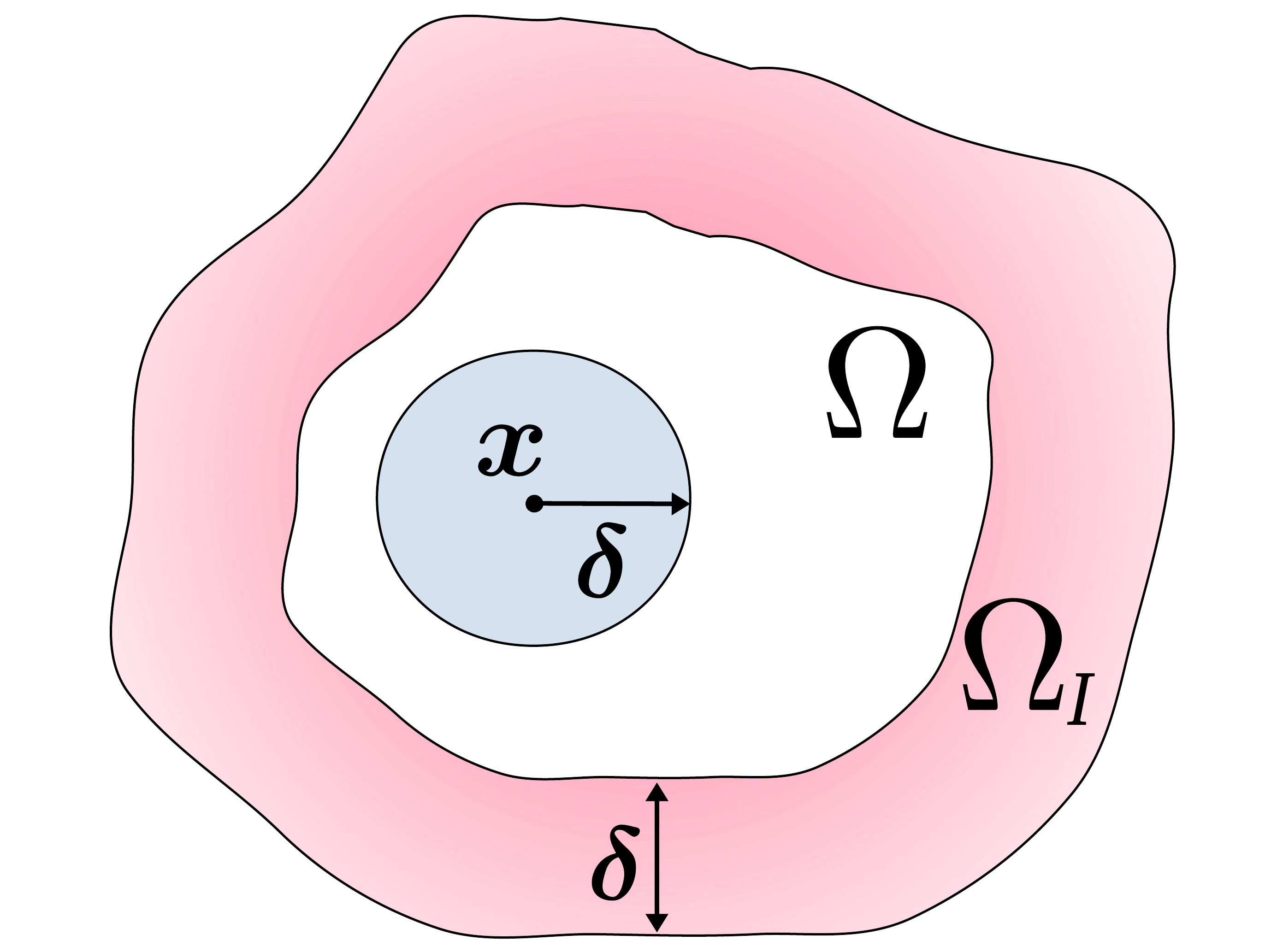}
\hspace{1cm}
\includegraphics[width=0.35\textwidth]{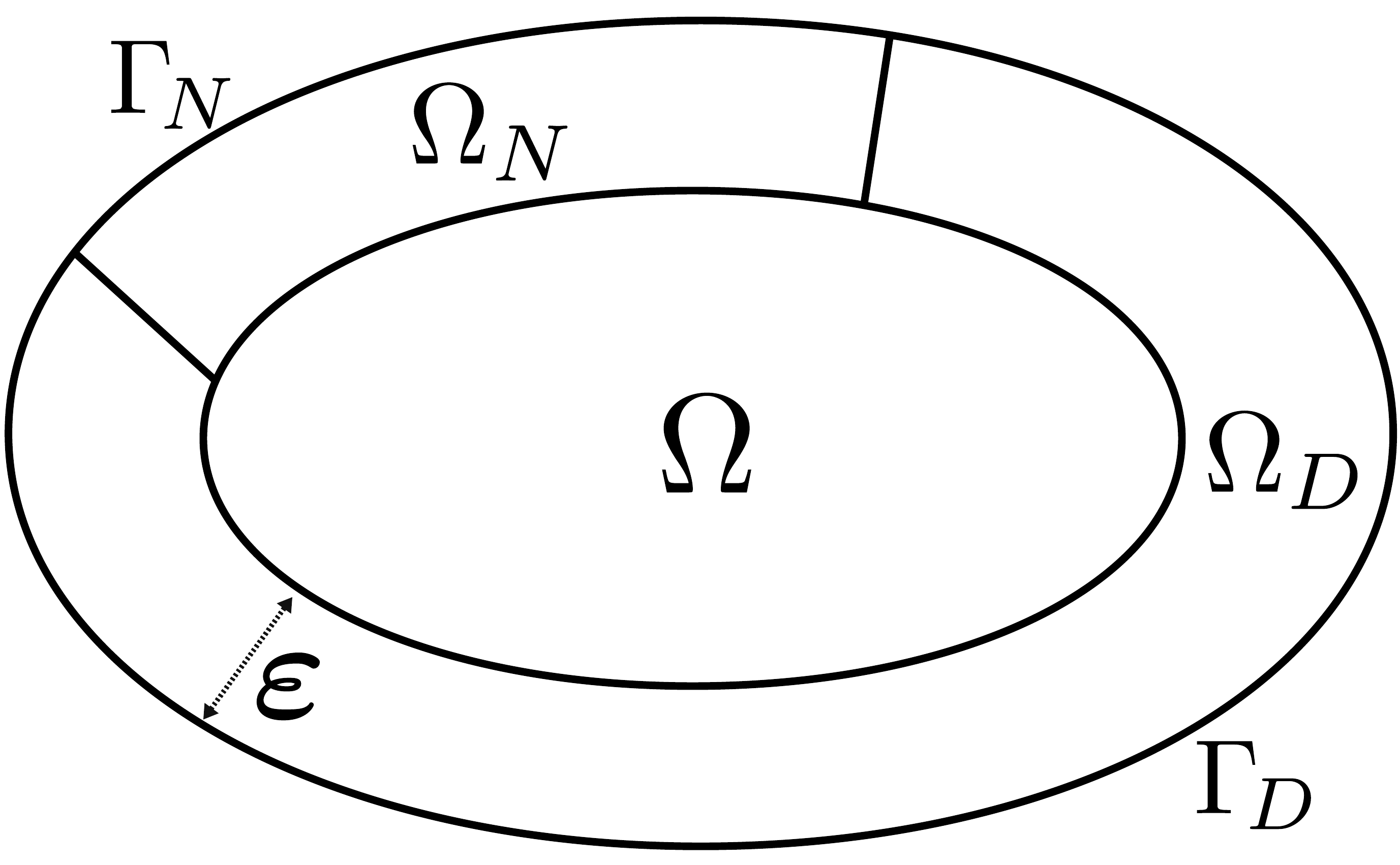}
\end{tabular}
\vspace{-0ex}
\caption{Left: the domain $\omg$, the support of $\gamma$ at a point $\xb\in\omg$, $B_\delta(\xb)$, and the induced interaction domain $\omgi$. Right: two-dimensional configuration. Here, $\omgn\cup\omgd=\omgi$, $\omgomgi=\oomg$ and $\Gamma_N\cup\Gamma_D=\Gamma$.}
\label{fig:domains}
\end{figure}
Corresponding to the divergence operator $\mcD(\nub)$ we introduce a nonlocal {\it interaction} operator 
\begin{equation}\label{eq:interaction}
\mcN(\nub)(\xb)=-\int_\oomg \left(\nub(\xb,\yb)+\nub(\yb,\xb)\right)\alphab(\xb,\yb)\,d\yb \qquad \xb\in\omgi.
\end{equation}
The integral $\int_\omgi \mcN(\nub)\,d\xb$ generalizes the notion of a flux $\int_{\partial\omg}{\bf q}\cdot{\bf n}\,dA$ through the boundary of a domain, with $\mcN(\nub)$ playing the role of a flux density ${\bf q}\cdot{\bf n}$. The key difference between \eqref{eq:interaction} and a conventional flux is that in the former the flux is a {\it volume} integral, whereas in the latter it is a {\it boundary} integral. Nonetheless, the nonlocal divergence and interaction operators satisfy a nonlocal Gauss theorem $\int_\omg\mcD(\nub)\,d\xb=\int_\omgi\mcN(\nub)\,d\xb$. We refer to \cite{Du2013} for additional nonlocal vector calculus results, including  generalized nonlocal Green's identities.

We respectively introduce the nonlocal energy semi-norm, nonlocal energy space, and nonlocal volume-constrained energy space
\begin{equation}
\begin{array}{ll}
& |||v|||^2    := \displaystyle\frac12\int_{\oomg}\int_{{\oomg}}(\mcG v)^2\,d\yb \, d\xb \\ [5mm]
& V(\oomg)   := \left\{ v  \in L^2(\oomg) \,\,:\,\, |||v|||_{\oomg} < \infty \right\}\\[3mm]
& V_c(\oomg) := \left\{v\in V({\oomg}) \,\,:\,\, v=0\;{\rm on}\;\omgd\right\} \;\; 
                  \hbox{ for $\omgd \subseteq \omgi$.}
\end{array}
\end{equation}
We also define the volume-trace space $\widetilde V_c(\oomg):=\{v|_\omgd: \,v\in V(\oomg)\}$ and the dual spaces $V'(\oomg)$ and $V'_c(\oomg)$ with respect to $L^2$-duality pairings.

We consider kernels such that the corresponding energy norm satisfies a Poincar\'e-like inequality, i.e. $\|v\|_{0,{\oomg}}\leq C_{pn}|||v|||$ for all $v\in V_c(\oomg)$, where  $C_{pn}$ is referred to as the nonlocal Poincar\'e constant. Kernels satisfying this property can be found in \cite[Section~4.2]{Du2012}; for such kernels\footnote{The nonlocal Poincar\'e inequality holds for an even more general class of properly scaled, non-increasing, kernel functions, see \cite{DuMengesha}.}, in \cite{DuMengesha}, it is shown that the Poincar\'e constant is independent of $\veps$ if $\veps\in (0, \veps_0]$ with a certain fixed number $\veps_0$.

\smallskip
A popular example is the class of integrable kernels\footnote{Specifically, we are referring to kernels for which there exist positive constants $\gamma_1$ and $\gamma_2$ such that $\gamma_1\leq \int_{\oomg\cap B_\veps(\xb)} \gamma(\xb,\yb)\,d\yb$ and $\int_\oomg \gamma^2(\xb,\yb)\,d\yb\leq \gamma_2^2$ for all $\xb\in\omg$.} for which $V(\oomg)$ and $V_c(\oomg)$ are equivalent to $L^2({\oomg})$ and $L^2_c(\oomg)$; in this case, the operator $\mcL$ is such that $\mcL:L^2(\oomg)\to L^2(\oomg)$ \cite{Du2012}.

\subsection{Volume-constrained nonlocal diffusion problems}\label{sec:volume-constrained-problems}
We refer to the simplified configuration in Figure \ref{fig:domains} (right); here we let $\Gamma=\partial\oomg$, $\omgi=\omgn\cup\omgd$ such that $\omgn\cap\omgd=\emptyset$ and $\Gamma=\Gamma_N\cup\Gamma_D$ such that $\Gamma_N\cap\Gamma_D=\emptyset$. For $s\in V'_c(\oomg)$, $g_n\in V'(\omgn)$ and $v_n\in \widetilde V_c(\oomg)$, we want to solve
\begin{equation}\label{eq:nonlocal-exact}
\left\{\begin{array}{ll}
-\ds\mcL\un       = s   & \xb\in\omg  \\[3mm]
-\ds\mcN(\mcG\un) = \gn & \xb\in\omgn \\[3mm]
\un               = v_n & \xb\in\omgd,
\end{array}\right.
\end{equation} 
where \eqref{eq:nonlocal-exact}$_2$ and \eqref{eq:nonlocal-exact}$_3$ are the nonlocal counterpart of a Neumann and Dirichlet boundary conditions, referred to as Neumann and Dirichlet volume constraints, respectively. More specifically, by composition of the nonlocal interaction and gradient operators we have that \eqref{eq:nonlocal-exact}$_2$ corresponds to
\begin{equation}\label{eq:N}
-\ds\mcN(\mcG\un)(\xb) = \int_\omgomgi (\un(\xb)-\un(\yb))\gamma(\xb,\yb)\,d\yb 
                       = \gn  \quad \forall\,\xb\in\omgn.
\end{equation}
As for local equations, the weak form of \eqref{eq:nonlocal-exact} is obtained by multiplying both sides by a test function $z\in V_c$ and integrating over $\omg$, i.e.
\begin{equation}\label{eq:nonlocal-var}
\ds-\int_\omg \mcL \un z\,d\xb = \int_\omg sz\,d\xb \quad \forall\, z\in V_c(\oomg).
\end{equation}
Using nonlocal integration by parts \cite{Du2013} and the Neumann constraint, \eqref{eq:nonlocal-var} is equivalent to
\begin{equation}\label{eq:nonlocal-var-byparts}
\begin{aligned}
\ds\int_\oomg& \int_\oomg \mcG \un \mcG z \,d\yb\,d\xb = - 
\int_\omgn \mcN(\mcG\un) z\,d\xb + \int_\omg sz\,d\xb  \quad\Rightarrow\\[3mm]
\ds\int_\oomg&\int_\oomg (\un(\xb)-\un(\yb))(z(\xb)-z(\yb))\gamma(\xb,\yb)\,d\yb\,d\xb =
\int_\omgn \gn z\,d\xb + \int_\omg sz\,d\xb  \quad\Rightarrow\\[3mm]
a&(u,z) = F(z),
\end{aligned}
\end{equation}
where the bilinear form and the linear functional are defined as $a(u,z)=\langle u,z\rangle_{V_c}$ and 
$F(v)=\int_\omgn \gn z\,d\xb + \int_\omg sz\,d\xb$. It can be easily shown \cite{Du2012} that for every $\gamma(\cdot,\cdot)$ satisfying the Poincar\'e inequality $a(\cdot,\cdot)$ is coercive and continuous in $V_c(\oomg)\times V_c(\oomg)$ and that $F(\cdot)$ is continuous in $V_c(\oomg)$. Thus, by the Lax-Milgram theorem problem \eqref{eq:nonlocal-var-byparts} is well-posed.

\section{Proposed strategies}\label{sec:strategy}
In engineering applications it is often the case that data are only available on the boundary $\Gamma$ and not in $\omgi$; in particular, most of the times, we are given force or pressure data (i.e. a local Neumann boundary condition) on parts of $\Gamma$. As shown in \cite{Du2012} and as recalled above, this is not enough for the well-posedness of problem \eqref{eq:nonlocal-var-byparts}. 

\smallskip We make the following assumptions.

\smallskip\noindent \textbf{A1} The kernel function $\gamma$ is such that the limit of the nonlocal diffusion operator is the classical Laplacian, i.e. 
\begin{equation}\label{eq:L-limit}
\mcL w(\xb)\to \Delta w(\xb), \quad {\rm as} \quad \veps\to 0. 
\end{equation}
This is obtained by scaling $\gamma$ using some appropriate constant proportional to a power of $\veps$.

\smallskip\noindent \textbf{A2} There exists a local (differential) operator that approximates well enough the nonlocal one when the solution does not feature a nonlocal behavior, i.e. does not exhibit irregularities. Because of assumption \eqref{eq:L-limit} in {\bf A1}, we use the classical Laplacian $\Delta$ as the approximation of $\mcL$ in \eqref{eq:L}. We refer to this model as the {\it surrogate} local model.

\medskip\noindent \textbf{A3} Only the following data are available:

\smallskip\noindent
{\bf 1.} $g_l\in L^2(\Gamma_N)$: {\it local} Neumann boundary data on $\Gamma_N$;\\
{\bf 2.} $v_n\in\widetilde V_c(\oomg)$ on $\omgd$: nonlocal Dirichlet data;\\
{\bf 3.} $s\in V_c'(\oomg)$: forcing term over $\oomg$.

\smallskip\noindent Once again, these do not guarantee existence and uniqueness of a nonlocal solution.

\begin{remark}\label{rem:extension}
We point out that our strategy is readily applicable to a much broader class of nonlocal operators as long as \emph{\bf A2} holds. As an example, this approach could be applied to a linear nonlocal elasticity model (specifically the linear peridynamic solid model \cite{Silling2010}) for which the corresponding surrogate local model is the classical Navier-Cauchy equation of linear elasticity, as the latter is the local limit of the former. 
\end{remark}

Our goal is to design a strategy to automatically convert $g_l$ into a nonlocal volume constraint (either of Neumann or Dirichlet type) on $\omgn$. In the next sections we introduce two conversion approaches and present qualitative comparison results. Note that the conversion problem is an {\it ill-posed} inverse problem as there exists an infinite number of nonlocal data corresponding to $g_l$ for which the associated nonlocal problem is well-posed. However, among all possible choices, we look for a strategy such that the corresponding nonlocal solution, say $\wun$, satisfies
\begin{equation}
\wun \to u_l \;\; {\rm as} \;\; \veps\to 0 \quad {\rm in} \;\; V(\oomg) \;\; {\rm and} \;\; L^2(\oomg),
\end{equation}
where $u_l$ is the solution of the following (surrogate) Poisson equation
\begin{equation}\label{eq:local-Neumann}
\left\{\begin{array}{ll}
-\Delta \ul = s               & \xb\in\oomg \\[3mm]
-\nabla\ul\cdot {\bf n} = g_l & \xb\in\Gamma_N \\[3mm]
\ul = v_n                     & \xb\in\Gamma_D,
\end{array}\right.
\end{equation}
i.e. the solution of the local problem with boundary data as in {\bf A2}. Here, by prescribing the Dirichlet condition on $\Gamma_D$ we are assuming that $v_n|_{\Gamma_D}$ exists and is such that $v_n|_{\Gamma_D}\in H^\frac12(\Gamma_D)$\footnote{Note that, even though this is a regularity requirement (not desirable in nonlocal contexts), we are not assuming $v_n\in H^1(\omgn)$, but only that $v_n$ has a well-defined trace on $\Gamma_D$.}. 

\subsection{Neumann strategy}\label{sec:Neumann-approach}
This is our main and most promising strategy. The key idea is to use the available data in {\bf A3} to solve the surrogate problem in $\oomg$ and utilize the local solution $\ul$ to compute the corresponding force, say $\wgn$, over $\omgn$. It is clear from the right hand side in \eqref{eq:nonlocal-var-byparts} that the nonlocal Neumann data is indeed a forcing term acting on $\omgn$; thus, $\wgn$ will be used as an approximation of $\gn$ to solve \eqref{eq:nonlocal-exact}. We proceed step by step.

\medskip\noindent
{\bf 1N} Solve the surrogate local problem \eqref{eq:local-Neumann}.

\medskip\noindent
{\bf 2N} Compute the forces on $\omgn$ associated with $\ul$. This is achieved by applying the nonlocal Neumann operator $\mcN(\mcG\,\cdot)$ to $\ul$, i.e. $-\mcN(\mcG\ul)(\xb)=\wgn(\xb)$, for $\xb\in\omgn$. This represents an approximation of the nonlocal Neumann data $\gn$. Note that, for the same reasons as for the operator $\mcL$, the Neumann operator $\mcN(\mcG\,\cdot)$ also maps $V$ into $V'$. This implies that 
\begin{equation}\label{eq:wgn-in-L2}
\wgn\in V'(\omgn).
\end{equation}

\medskip\noindent
{\bf 3N} Compute an approximation of the nonlocal solution $\un$, say $\wun$, using $\wgn$ as Neumann data, i.e. solve 
\begin{equation}\label{eq:nonlocal-Neumann}
\left\{\begin{array}{ll}
-\ds\mcL \wun      = s    & \xb\in\omg \\[3mm]
-\ds\mcN(\mcG\wun) = \wgn & \xb\in\omgn \\[3mm]
\wun               = v_n  & \xb\in\omgd.
\end{array}\right.
\end{equation}

\smallskip Because of \eqref{eq:wgn-in-L2}, problem \eqref{eq:nonlocal-Neumann} is well-posed.

\subsection{Dirichlet strategy}\label{sec:Dirichlet-approach}
We present an alternative, and more straightforward, that approach consists in using $u_l$ computed as in {\bf 1N} as Dirichlet volume constraint for the nonlocal problem in $\omgn$. Thus, we have the following procedure.

\medskip\noindent
{\bf 1D} Solve the surrogate local problem \eqref{eq:local-Neumann}.

\medskip\noindent
{\bf 2D} Solve the following nonlocal problem:
\begin{equation}\label{eq:nonlocal-Dirichlet}
\left\{\begin{array}{ll}
-\ds\mcL \widetilde u_{n,D} = s & \xb\in\omg \\[3mm]
\widetilde u_{n,D}          = u_l        & \xb\in\omgn \\[3mm]
\widetilde u_{n,D}          = v_n        & \xb\in\omgd.
\end{array}\right.
\end{equation}
Because of its regularity, $u_l\in\widetilde V(\oomg)$ and, thus, problem \eqref{eq:nonlocal-Dirichlet} is well-posed.

\medskip
This approach cleary delivers a solution that is unable to catch nonlocal behaviors in a neighborhood of the Neumann boundary. This effect is less strong in the previous approach because, instead of prescribing a local constraint on the solution itself, the Neumann approach only prescribes an equivalence of forces allowing the solution to feature a nonlocal behavior. In other words, the locality constraint is weaker. 

This is confirmed by one-dimensional numerical results. We consider $\omg=(0,1)$, $\oomg=(-\veps,1+\veps)$, and $\omgn=(-\veps,0)$. We test both homogeneous and non-homogeneous Neumann conditions; specifically, we consider the following problem settings.

\smallskip\noindent
{\bf A} $s=-12x^2-6/5\veps^2$, $g_l=-4\veps^3$ and $v_n=x^4$;\\
{\bf B} $s=-12x^2-6/5\veps^2$, $g_l=2/5\veps^2(8-13\veps)$ and $v_n = x^4+2x+3/5\veps^2(x^2+2x-3-4\veps-\veps^2)$,

\smallskip\noindent where dependence of the data on $\veps$ is only for testing purposes. We do not specify discretization details as they are not relevant for now. In Figure \ref{fig:comparison} we report $\wun$, $\widetilde u_{n,D}$ and $\ul$ for {\bf A} (left) and {\bf B} (right) in a region around the Neumann boundary. Results show that in both cases the solutions obtained with the Neumann and Dirichlet approaches are significantly different in the zoomed area; in fact, while $\widetilde u_{n,D}$ is, by construction, on top of $u_l$, $\wun$ only reproduces its normal derivative.
\begin{figure}
\centering
\begin{tabular}{cc}
\includegraphics[width=0.4\textwidth]{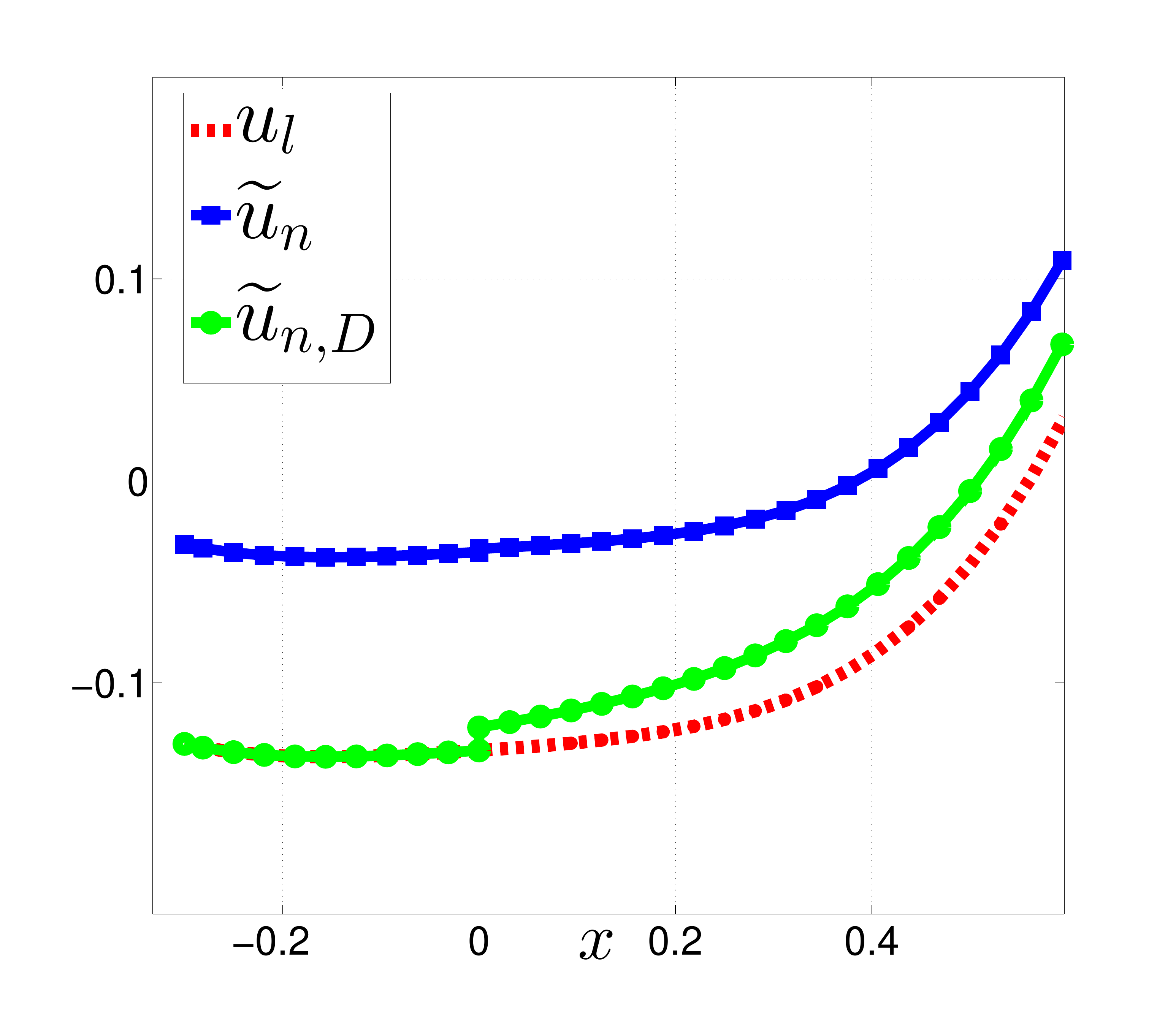}
\includegraphics[width=0.4\textwidth]{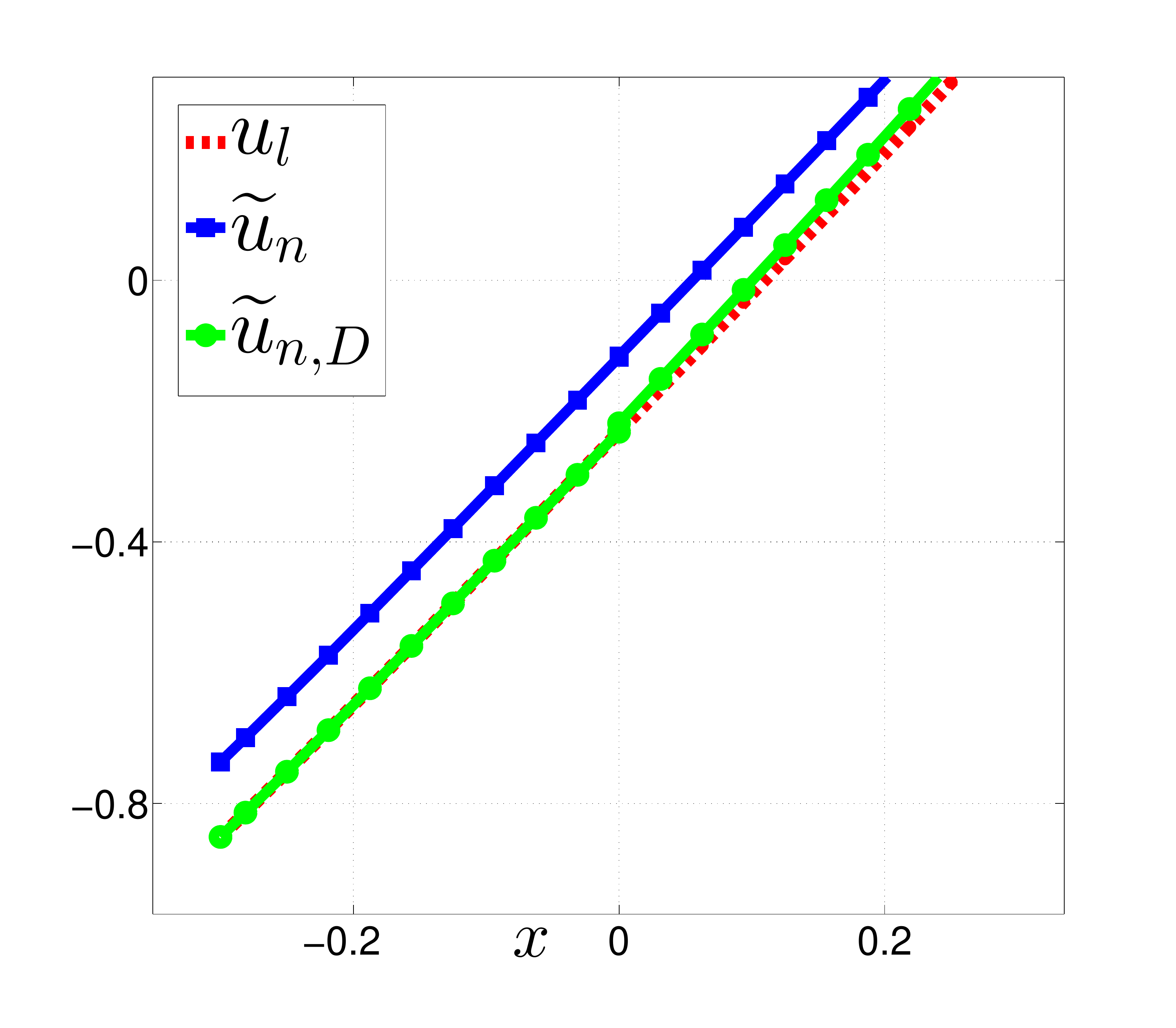}
\end{tabular}
\caption{Comparison of solutions obtained with Neumann ($\wun$) and Dirichlet ($\widetilde u_{n,D}$) strategies for case {\bf A} (left) and {\bf B} (right) around the Neumann boundary.}
\label{fig:comparison}
\end{figure}
Note that when the data are such that local and nonlocal models are equivalent\footnote{For the operators under considerations, we have equivalence for polynomials up to the third order, see numerical experiments in Section \ref{sec:numerical-tests} for an illustration.}, the two approaches coincide and we have that $\widetilde u_{n,D}=\wun=\ul$. This is confirmed by numerical experiments in Section \ref{sec:numerical-tests}.

\section{Convergence to the local limit}\label{sec:local-limit}
In this section we study the limiting behavior of the solution as the nonlocal interactions vanish, i.e. as $\veps\to 0$. We introduce the errors
\begin{equation}\label{eq:local-error}
e_E = |||\wun-\ul|||
\quad {\rm and} \quad
e_0 = \|\un-\wun\|_{0,\oomg}.
\end{equation}

\smallskip The following proposition provides a bound for $e_E$ for the Neumann approach.
\begin{theorem}\label{thm:local-limit}
Let $\veps_0\in(0,\infty)$ and $\mcU_l:=\{\ul\in C^4(\oomg): \ul \hbox{ solves \eqref{eq:nonlocal-Neumann} for }\veps\in(0,\veps_0]\}$ be a family of solutions of \eqref{eq:nonlocal-Neumann}. Then, for all $\ul\in\mcU_l$
\begin{equation}
e_E \leq C \veps^2 \|D^{(4)}\ul\|_{\infty,\oomg}\,,
\end{equation}
where $C$ is a positive constant independent of $\veps$ and $\ul$ and $D^{(4)}$ indicates the 4-th derivative operator.
\end{theorem}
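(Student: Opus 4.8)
The plan is to estimate the energy seminorm of the error $\wun - \ul$ directly from the variational formulation. The key observation is that both $\wun$ and $\ul$ can be characterized through the nonlocal bilinear form $a(\cdot,\cdot)$. Indeed, $\wun$ solves \eqref{eq:nonlocal-Neumann}, whose weak form by \eqref{eq:nonlocal-var-byparts} reads $a(\wun, z) = \int_\omgn \wgn z\,d\xb + \int_\omg s z\,d\xb$ for all $z \in V_c(\oomg)$, while $\wgn = -\mcN(\mcG \ul)$ by step {\bf 2N} and $s = -\mcL \ul$ in $\omg$ by \eqref{eq:local-Neumann} (recalling $-\Delta \ul = s$; but here one must be careful, since the data fed to the nonlocal problem is the genuine $s$, not $-\mcL\ul$). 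So the first step is to write a ``nonlocal weak identity'' satisfied by $\ul$: applying nonlocal integration by parts to $\ul$ against a test function $z \in V_c(\oomg)$ gives $a(\ul, z) = -\int_\omgn \mcN(\mcG\ul) z\,d\xb - \int_\omg \mcL\ul\, z\,d\xb$. Subtracting, we obtain $a(\wun - \ul, z) = \int_\omg (s + \mcL\ul)\,z\,d\xb = \int_\omg (\mcL\ul - \Delta\ul)\, z\,d\xb$ for all $z \in V_c(\oomg)$, since the $\omgn$-terms cancel exactly by construction of $\wgn$.

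Next I would test with $z = \wun - \ul$ (this requires $\wun - \ul \in V_c(\oomg)$, which holds because both functions equal $v_n$ on $\omgd$) and invoke coercivity of $a(\cdot,\cdot)$ on $V_c(\oomg)$, uniform in $\veps \le \veps_0$ by the $\veps$-independent Poincaré constant quoted from \cite{DuMengesha}. This yields
\begin{equation}
|||\wun - \ul|||^2 \le C\, a(\wun - \ul, \wun - \ul) = C \int_\omg (\mcL\ul - \Delta\ul)(\wun - \ul)\,d\xb \le C \,\|\mcL\ul - \Delta\ul\|_{0,\omg}\,\|\wun - \ul\|_{0,\oomg}.
\end{equation}
Applying the nonlocal Poincaré inequality to the last factor and dividing through by $|||\wun - \ul|||$ gives $e_E \le C\,\|\mcL\ul - \Delta\ul\|_{0,\omg}$.

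The remaining, and genuinely substantive, step is the local truncation estimate $\|\mcL\ul - \Delta\ul\|_{0,\omg} \le C\veps^2 \|D^{(4)}\ul\|_{\infty,\oomg}$. This is the standard consistency-order computation for the nonlocal Laplacian: Taylor-expand $\ul(\yb)$ around $\xb$ inside the integral defining $\mcL\ul(\xb)$ in \eqref{eq:L}; the zeroth-order term vanishes, the first-order (gradient) term vanishes by symmetry of $\gamma$, the second-order term reproduces $\Delta\ul(\xb)$ exactly by the scaling normalization in {\bf A1}, the third-order term vanishes again by symmetry, and the remainder is controlled by the fourth derivative times the fourth moment $\int_{B_\veps(\xb)} \|\yb - \xb\|^4 \gamma(\xb,\yb)\,d\yb = O(\veps^2)$ (given the $\veps$-scaling that makes the second moment $O(1)$). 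One must check this holds uniformly for $\xb \in \omg$ — in particular near $\partial\omg$, where the interaction ball may stick out of $\omg$ but stays inside $\oomg$, which is why the $C^4$ norm is taken over $\oomg$. I expect this truncation estimate, together with the care needed in justifying that $\ul \in C^4(\oomg)$ makes all the pointwise expansions legitimate and that the data substitution ($s$ vs. $-\mcL\ul$ vs. $-\Delta\ul$) is handled consistently, to be the main obstacle; the variational manipulations above are routine once the cancellation on $\omgn$ is correctly set up.
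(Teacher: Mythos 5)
Your proposal is correct and follows essentially the same route as the paper: the paper phrases your ``nonlocal weak identity for $\ul$'' as an auxiliary nonlocal problem with right-hand side $\sl=-\mcL\ul$ and Neumann datum $\wgn$, subtracts the two weak forms so the $\omgn$-terms cancel, tests with $z=\wun-\ul$, and combines Cauchy--Schwarz, the $\veps$-uniform Poincar\'e inequality, and the Taylor consistency bound $|\mcL\ul-\Delta\ul|\le C\veps^2\|D^{(4)}\ul\|_{\infty,\oomg}$. Your write-up is, if anything, more explicit than the paper's on the truncation step (odd moments vanishing by symmetry, second moment normalization from \textbf{A1}, and the boundary-layer points whose interaction ball stays in $\oomg$), which the paper compresses into the single estimate \eqref{eq:s-sl}.
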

\begin{proof}
Recall that, by definition, $\wun$ and $\ul$ satisfy
\begin{equation}\label{eq:nonlocal-local-Neumann}
\left\{\begin{array}{ll}
-\ds\mcL \wun  = s    = -\Delta \ul   & \xb\in\omg \\[3mm]
-\ds\mcN(\wun) = \wgn = -\ds\mcN(\ul) & \xb\in\omgn \\[3mm]
\wun           = v_n                  & \xb\in\omgd.
\end{array}\right.
\end{equation}
We introduce the following {\it nonlocal} auxiliary problem for the {\it local} solution $\ul$:
\begin{equation}\label{eq:nonlocal-auxiliary}
\left\{\begin{array}{ll}
-\ds\mcL \ul  = \sl  = -\int_\oomg (\ul(\yb)-\ul(\xb))\gamma(\xb,\yb)\,d\yb & \xb\in\omg \\[3mm]
-\ds\mcN(\ul) = \wgn & \xb\in\omgn \\[3mm]
\ul           = v_n  & \xb\in\omgd.
\end{array}\right.
\end{equation}
In order to estimate $e_E$ we first consider the point-wise difference 
$s(\xb)\!-\!\sl(\xb)$. By the Taylor's theorem
\begin{equation}\label{eq:s-sl}
|s(\xb)-\sl(\xb)| =    \left|\int_\oomg (\ul(\yb)-\ul(\xb))\gamma(\xb,\yb)\,d\yb - \Delta\ul \right|
                  \leq \widetilde C \veps^2 |D^{(4)}\ul|_{\infty,\oomg}\,,
\end{equation}
where $\widetilde C$ is a positive constant independent of $\veps$ and $\ul$ and $D^{(4)}$ indicates the 4-th derivative operator.
Next, we consider the weak forms of \eqref{eq:nonlocal-local-Neumann} and \eqref{eq:nonlocal-auxiliary} for the same test function $z\in V_c$; we have
\begin{equation}\label{eq:weak-wun}
\int_\oomg\int_\oomg (\wun(\xb)-\wun(\yb))(z(\xb)-z(\yb))\gamma(\xb,\yb)\,d\yb\,d\xb = 
\int_\omgn \wgn\,z\,d\xb + \int_\omg s\,z\,d\xb,
\end{equation}
\begin{equation}\label{eq:weak-ul}
\int_\oomg\int_\oomg (\ul(\xb)-\ul(\yb))(z(\xb)-z(\yb))\gamma(\xb,\yb)\,d\yb\,d\xb = 
\int_\omgn \gn\,z\,d\xb + \int_\omg \sl\,z\,d\xb.
\end{equation}
Subtraction, yields
\begin{displaymath}
\begin{aligned}
\int_\oomg\int_\oomg(\wun(\xb)-\ul(\xb)-\wun(\yb)+\ul(\yb))(z(\xb)-z(\yb))
\gamma(\xb,\yb)\,d\yb\,d\xb = \int_\omg (s-\sl)\,z\,d\xb.
\end{aligned}
\end{displaymath}
By taking $z=\wun-\ul\in V_c$, we have
\begin{displaymath}
|||\wun-\ul |||^2 \leq \int_\omg (s-\sl)\,(\wun-\ul)\,d\xb
\leq \|s-\sl\|_{0,\omg} \|\wun-\ul\|_{0,\omg} \leq C \veps^2 \|D^{(4)}\ul\|_{\infty,\oomg} |||\wun-\ul |||,
\end{displaymath}
where we omitted the higher order terms because not relevant and where the last inequality follows from the Poincar\'e inequality. Then, the thesis follows by dividing both sides by $|||\wun-\ul |||$.
\end{proof}
\begin{remark}\label{L2rate}
Theorem \ref{thm:local-limit} implies that the convergence rate for the $L^2$ norm of the difference between local and nonlocal solutions, i.e. $e_0$, is at least quadratic. In fact, by the Poincar\'e inequality, we have
\begin{equation}\label{eq:L2rate}
e_0 = \|\wun-\ul\|_{0,\oomg} \leq C_{n,p} |||\wun-\ul||| \leq \widehat C \veps^2 \|D^{(4)}\ul\|_{\infty,\oomg}.
\end{equation}
\end{remark}
\begin{remark}\label{Dirichlet-rate}
A simple modification of the proof of Theorem \ref{thm:local-limit} yields the same convergence result for the Dirichlet strategy\footnote{Simply extend the Dirichlet condition to the whole interaction domain and disregard the term on $\omgn$ in the weak forms.}. The same convergence rate is inherited by the $L^2$ norm, as described in Remark \ref{L2rate}.
\end{remark}
\begin{remark}\label{consistency}
Theorem \ref{thm:local-limit} implies that when the data $g_l$, $s$ and $v_n$ are smooth enough to have $\mcL\ul=\Delta\ul$, then $\wun=\ul$. We use this observation to conduct a consistency test for the proposed conversion method.
\end{remark}

\section{Numerical tests}\label{sec:numerical-tests}
With the purpose of illustrating the theoretical results, in this section we report the results of one-dimensional numerical tests. Even though preliminary, these results are promising and provide the ground work for realistic simulations.

We consider the one-dimensional configuration in Figure \ref{fig:1D-setting}; we let $a=0$, $b=1$, $\Gamma_N=\{x=-\veps\}$, $\Gamma_D=\{x=1+\veps\}$, and
\begin{equation}\label{eq:constant-kernel}
\gamma(x,y)=\dfrac{3}{\veps^3}\,\mcX(|x-y|<\veps).
\end{equation}
This integrable kernel is such that $\mcL w \to \Delta w$ as $\veps\to 0$. In all our tests we discretize the nonlocal equation with the finite element method (FEM) and utilize piecewise linear finite elements. The domain $\oomg$ is partitioned in intervals of the same size $h$. We denote the FEM solutions by $\wun^h$ and $\widetilde u_{n,D}^h$ and introduce the discrete counterparts of the $e_E$ and $e_{0}$, i.e. 
\begin{displaymath}
e_{E,h}=|||w^h-\ul|||
\quad {\rm and} \quad
e_{0,h}=\|w^h-\ul\|,
\end{displaymath}
where $w^h$ is either $\wun^h$ or $\widetilde u_{n,D}^h$. We test both the consistency and the convergence to local limits.
\begin{figure}[t!]
\centering
\includegraphics[width=0.7\textwidth]{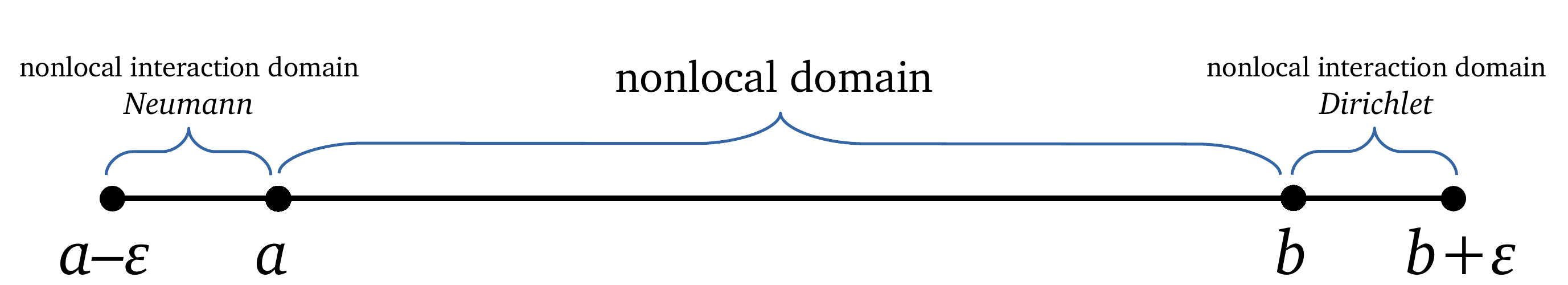}
\caption{One-dimensional configuration.}
\label{fig:1D-setting}
\end{figure}
\begin{remark}
As mentioned in the introduction, our conversion method is not tied to any discretization; in fact, both mesh-free and mesh-based methods can be employed. FEM is, in general, quite expensive for large scale nonlocal simulations, but affordable in a one-simensional setting.
An advantage of using the piecewise linear FEM is the asymptotic compatibility, a property studied in \cite{tian2013analysis, Tian2014} on the robustness of numerical schemes under change of $\veps$.
\end{remark}
\begin{remark}
Note that since we use manufactured solutions for which the local solution can be computed explicitly, we do not approximate the local problem.
\end{remark}

\subsection{Consistency}
We consider local solutions $\ul$ such that $\mcL\ul=\Delta\ul$. According to Remark \ref{consistency} and to the discussion in Section \ref{sec:Dirichlet-approach} the approximate nonlocal solutions $\wun$ and $\widetilde u_{n,D}$ are such that $\wun=\widetilde u_{n,D}=\ul$. Thus, we consider the following problem settings:

\smallskip\noindent
{\bf A} $\ul=x$, $g_l=1$, $v_l=1+\veps$ and $s=0$;\\
{\bf B} $\ul=x^3$, $g_l=3\veps^2$, $v_l=(1+\veps)^3$ and $s=-6x$.

\smallskip\noindent Note that for both {\bf A} and {\bf B} we have that $s=-\mcL u_l=-\Delta u_l$.
For the sake of comparison and to illustrate our theory we consider both the Neumann approach described in Section \ref{sec:Neumann-approach} and the Dirichlet approach described in Section \ref{sec:Dirichlet-approach}. As mentioned above, we expect the two approaches to be equivalent when the local and nonlocal operators are equivalent. Additionally, in case {\bf A} we expect $\wun=\widetilde u_{n,D}=\ul$ and the FEM solution to be $\epsilon$-machine accurate because the exact solution belongs to the space of discretized solutions; in case {\bf B} we also expect $\wun=\widetilde u_{n,D}=\ul$ and $e_{E,h}$ to be independent of $\veps$. Numerical tests confirm that for both Neumann and Dirichlet approaches in case {\bf A} $e_{E,h}=\epsilon$ and in case {\bf B}, $e_{E,h}\cong9$e-5 for a grid of size $h=2^{-6}$ and for several values of $\veps$ for both strategies. In Figure \ref{fig:consistency} we report illustrations of the numerical solutions for both tests cases: $\wun$, $\widetilde u_{nD}$, and $u_l$ are superimposed.
\begin{figure}
\centering
\begin{tabular}{cc}
\includegraphics[width=0.4\textwidth]{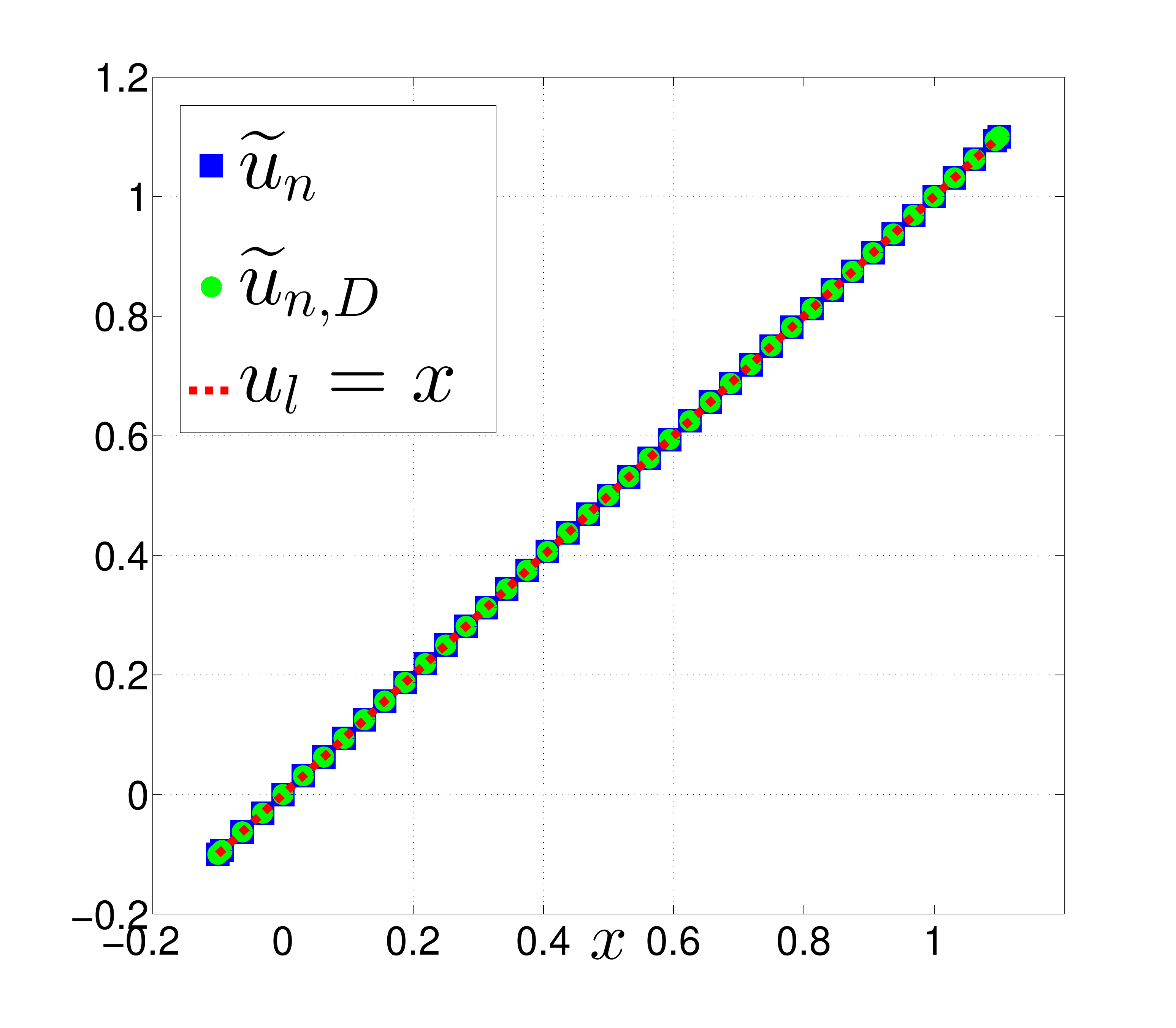}
\includegraphics[width=0.4\textwidth]{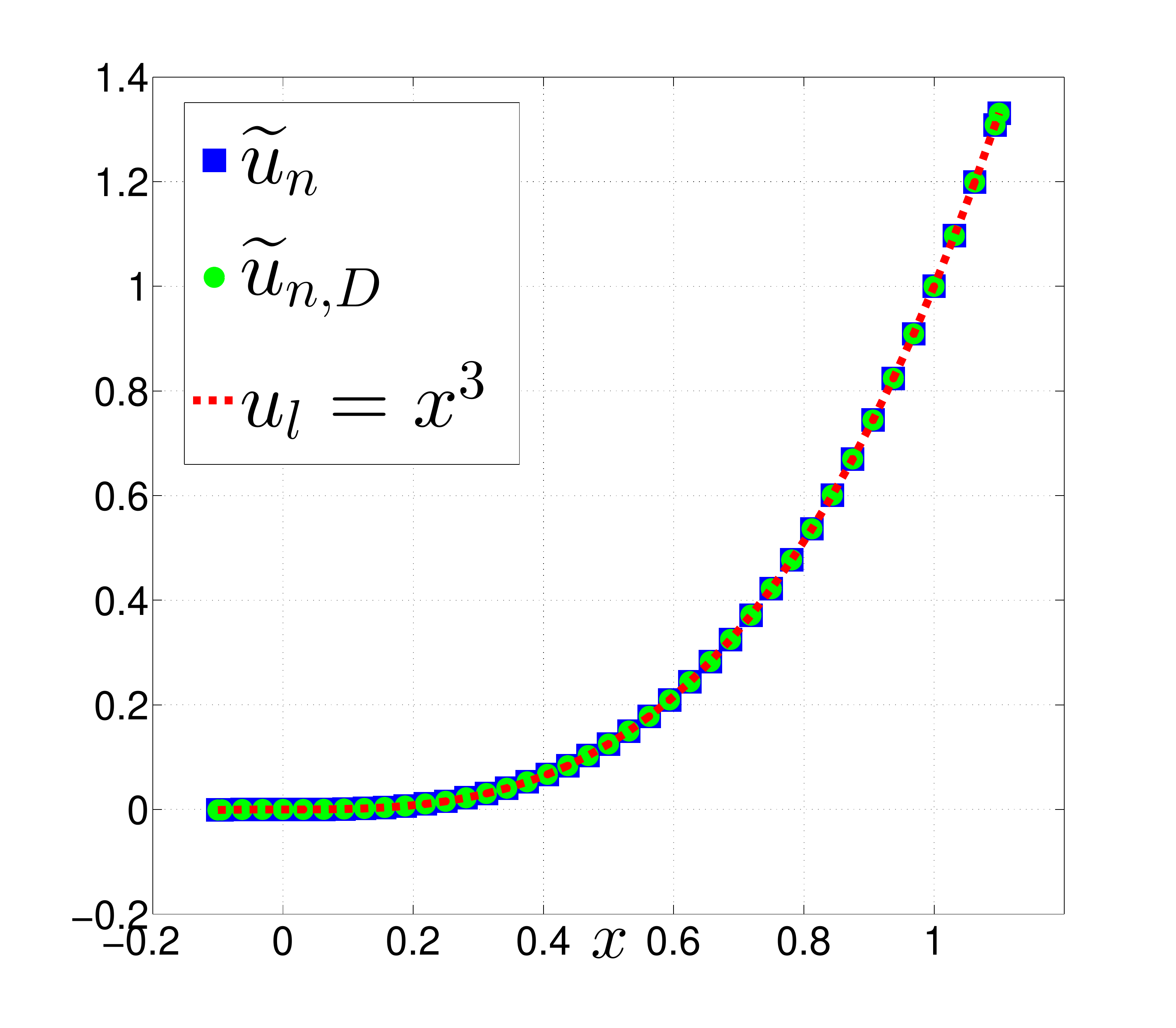}
\end{tabular}
\caption{Nonlocal solutions obtained with the Neumann and Dirichlet strategies and local solution for linear (left) and cubic (right) tests. Up to discretization error, the solutions coincide.}
\label{fig:consistency}
\end{figure}

\subsection{Convergence to local limits}
We perform numerical tests on the convergence of $\wun$ and $\widetilde u_{n,D}$ to the local limit.

We consider the data $g_l=2+5\veps^4$, $v_n=x(2+x^4)$, and $s=-20x^3$; the corresponding local solution is given by $\ul=x(2+x^4)$.

With the purpose of ``hiding'' the discretization error we compute the nonlocal solution on a very fine grid, i.e. $h=2^{-12}$; for decreasing values of $\veps$ we report results in Tables \ref{tab:energy-L2-smallh} and \ref{tab:energy-L2-smallh-D} for the Neumann and Dirichlet strategies respectively. The observed rates for $e_{E,h}$ and $e_{0,h}$ are in alignment with Theorem \ref{thm:local-limit} and Remark \ref{Dirichlet-rate}.

Next, for simoultaneously decreasing values of $\veps$, we test the asymptotic compatibility \cite{tian2013analysis, Tian2014} of our scheme; results are reported in Tables \ref{tab:energy-L2} and \ref{tab:energy-L2-D} for the Neumann and Dirichlet strategies respectively. Also in this case, we have a second order convergence rate. Note that we consider pairs $(h,\veps)=(\veps^2,\veps)$; this choice is motivated by the fact that, for piecewise linear finite element approximations, a linear dependence between $h$ and $\veps$ would compromise the convergence rate of the energy norm due to the influence of the discretization error on the local-limit error. The choice of $h$, makes the discretization error negligible so that the only contribution to the errors is given by the interaction length. As a confirmation, in Table \ref{tab:energy-L2-lin} we report the same results for the pairs $(h,\veps)=(\veps/4,\veps)$, we consider the Neumann approach only. While the convergence of $e_{0,h}$ is still quadratic, the convergence rate of $e_{E,h}$ asymptotically deteriorates (an additional pair with respect to previous tables is added to show deterioration).

Finally, note that, as expected, the errors obtained with the Dirichlet approach are lower that those obtained with the Neumann.
\begin{table}[t!]
\centering
\begin{tabular}{l|ll|ll}
$\veps$  & $e_{E,h}$ & rate  & $e_{0,h}$ & rate \\ \hline
$2^{-2}$ & 9.99e-02  & -     & 7.50e-02  & -    \\ 
$2^{-3}$ & 2.29e-02  & 2.12  & 1.55e-02  & 2.27 \\ 
$2^{-4}$ & 5.48e-03  & 2.06  & 3.50e-03  & 2.15 \\ 
$2^{-5}$ & 1.34e-03  & 2.03  & 8.28e-04  & 2.08 
\end{tabular}
\smallskip
\caption{{\bf Neumann approach:} energy and $L^2$ norm of the difference between local and discretized nonlocal solution for $h=2^{-12}$ and decreasing values of $\veps$.}
\label{tab:energy-L2-smallh}
\end{table}
\begin{table}[t!]
\centering
\begin{tabular}{l|ll|ll}
$\veps$  & $e_{E,h}$ & rate  & $e_{0,h}$ & rate \\ \hline
$2^{-2}$ & 6.95e-02  & -     & 2.48e-02  & -    \\ 
$2^{-3}$ & 1.56e-02  & 2.15  & 5.19e-03  & 2.26 \\ 
$2^{-4}$ & 3.70e-03  & 2.08  & 1.18e-03  & 2.13 \\ 
$2^{-5}$ & 8.99e-04  & 2.04  & 2.82e-04  & 2.07 
\end{tabular}
\smallskip
\caption{{\bf Dirichlet approach:} energy and $L^2$ norm of the difference between local and discretized nonlocal solution for $h=2^{-12}$ and decreasing values of $\veps$.}
\label{tab:energy-L2-smallh-D}
\end{table}      
\begin{table}[t!]
\centering
\begin{tabular}{ll|ll|ll}
$h$       & $\veps$  & $e_{E,h}$ & rate  & $e_{0,h}$ & rate \\ \hline
$2^{-4}$  & $2^{-2}$ & 1.02e-01  & -     & 8.39e-02  & -    \\ 
$2^{-6}$  & $2^{-3}$ & 2.30e-02  & 2.15  & 1.60e-02  & 2.39 \\ 
$2^{-8}$  & $2^{-4}$ & 5.49e-03  & 2.07  & 3.52e-03  & 2.18 \\ 
$2^{-10}$ & $2^{-5}$ & 1.34e-03  & 2.03  & 8.30e-04  & 2.09 
\end{tabular}
\smallskip
\caption{{\bf Neumann approach:} energy and $L^2$ norm of the difference between local and discretized nonlocal solution for simultaneously decreasing values of $\veps$ and $h$ such that $(h,\veps)=(\veps^2,\veps)$.}
\label{tab:energy-L2}
\end{table}
\begin{table}[t!]
\centering
\begin{tabular}{ll|ll|ll}
$h$       & $\veps$  & $e_{E,h}$ & rate  & $e_{0,h}$ & rate \\ \hline
$2^{-4}$  & $2^{-2}$ & 7.32e-02  & -     & 2.96e-02  & -    \\ 
$2^{-6}$  & $2^{-3}$ & 1.58e-02  & 2.22  & 5.42e-03  &  2.45\\ 
$2^{-8}$  & $2^{-4}$ & 3.70e-03  & 2.09  & 1.20e-03  &  2.18\\ 
$2^{-10}$ & $2^{-5}$ & 8.99e-04  & 2.04  & 2.83e-04  &  2.08
\end{tabular}
\smallskip
\caption{{\bf Dirichlet approach:} energy and $L^2$ norm of the difference between local and discretized nonlocal solution for simultaneously decreasing values of $\veps$ and $h$ such that $(h,\veps)=(\veps^2,\veps)$.}
\label{tab:energy-L2-D}
\end{table}
\begin{table}[t!]
\centering
\begin{tabular}{ll|ll|ll}
$h$      & $\veps$  & $e_{E,h}$ & rate & $e_{0,h}$ & rate \\ \hline
$2^{-4}$ & $2^{-2}$ & 1.02e-01  & -    & 8.39e-2   & -    \\ 
$2^{-5}$ & $2^{-3}$ & 2.41e-02  & 2.08 & 1.74e-2   & 2.27 \\ 
$2^{-6}$ & $2^{-4}$ & 6.33e-03  & 1.93 & 3.92e-3   & 2.15 \\ 
$2^{-7}$ & $2^{-5}$ & 1.98e-03  & 1.68 & 9.29e-4   & 2.08 \\
$2^{-8}$ & $2^{-6}$ & 7.75e-04  & 1.35 & 2.26e-4   & 2.05 \\ 
\end{tabular}
\smallskip
\caption{{\bf Neumann approach:} energy and $L^2$ norm of the difference between local and discretized nonlocal solution for simultaneously decreasing values of $\veps$ and $h$ such that $(h,\veps)=(\veps/4,\veps)$.}
\label{tab:energy-L2-lin}
\end{table}

\section{Conclusion}
We introduced a flexible, physically consistent and efficient strategy for the conversion of surface local data into volumetric data in the context of nonlocal modeling and simulation. Our technique does not have regularity constraints on the nonlocal solution, it can be applied in any dimension, and converges to the solution of the corresponding local problem as the nonlocality vanishes. 

More specifically, we achieve second order convergence of the energy norm as the nonlocal interaction vanish in {\it any dimension} and only requiring the local solution to belong to $C^4$ (which can be obtained when the boundary of the domain, the boundary data and the source term are smooth enough).
 
Furthermore, even if numerical results are only in one dimension, the implementation of this approach in two and three dimensions is straightforward and only requires PDE and nonlocal solvers that can be used as black boxes, i.e. the proposed method does not require {\it any} implementation effort. Also, the computational cost is the same as the one required by a single nonlocal simulation. 
 
\section{Acknowledgments}
Marta D'Elia was supported by Sandia National Laboratories (SNL), SNL is a multimission laboratory managed and operated by National Technology and Engineering Solutions of Sandia, LLC., a wholly owned subsidiary of Honeywell International, Inc., for the U.S. Department of Energys National Nuclear Security Administration contract number DE-NA0003525. Specifically, this work was supported through the Sandia National Laboratories Laboratory-directed Research and Development (LDRD) program. This paper describes objective technical results and analysis. Any subjective views or opinions that might be expressed in the paper do not necessarily represent the views of the U.S. Department of Energy or the United States Government. SAND Number: SAND2019-6287. 
The research of Xiaochuan Tian is supported in part by the U.S. NSF grant DMS-1819233. Yue Yu is supported by the U.S. NSF grant DMS-1620434 and the Lehigh faculty research grant.

The authors would like to thank Dr. D. Littlewood (Sandia Natioanl Laboratories, NM) for useful discussions and insights.


\begin{thebibliography}{10}

\bibitem{Buades2010}
A.~A.~Buades, B.~Coll, and J.M. Morel.
\newblock Image denoising methods. a new nonlocal principle.
\newblock {\em SIAM Review}, 52:113--147, 2010.

\bibitem{Alali2012}
B.~Alali and R.~Lipton.
\newblock Multiscale dynamics of heterogeneous media in the peridynamic
  formulation.
\newblock {\em Journal of Elasticity}, 106(1):71--103, 2012.

\bibitem{Askari2008}
E.~Askari.
\newblock Peridynamics for multiscale materials modeling.
\newblock {\em Journal of Physics: Conference Series, IOP Publishing},
  125(1):649--654, 2008.

\bibitem{Bates1999}
P.W. Bates and A.~Chmaj.
\newblock An integrodifferential model for phase transitions: Stationary
  solutions in higher space dimensions.
\newblock {\em J. Statist. Phys.}, 95:1119--1139, 1999.

\bibitem{Benson2000}
D.A. Benson, S.W. Wheatcraft, and M.M. Meerschaert.
\newblock Application of a fractional advection-dispersion equation.
\newblock {\em Water Resources Research}, 36(6):1403--1412, 2000.

\bibitem{Burch2014}
N.~Burch, M.~D'Elia, and R.~Lehoucq.
\newblock The exit-time problem for a markov jump process.
\newblock {\em The European Physical Journal Special Topics}, 223:3257--3271,
  2014.

\bibitem{Cortazar2008}
Carmen Cortazar, Manuel Elgueta, Julio~D Rossi, and Noemi Wolanski.
\newblock How to approximate the heat equation with neumann boundary conditions
  by nonlocal diffusion problems.
\newblock {\em Archive for Rational Mechanics and Analysis}, 187(1):137--156,
  2008.

\bibitem{Delgoshaie2015}
A.H. Delgoshaie, D.W. Meyer, P.~Jenny, and H.~Tchelepi.
\newblock Non-local formulation for multiscale flow in porous media.
\newblock {\em Journal of Hydrology}, 531(1):649--654, 2015.

\bibitem{DElia2017}
M.~D'Elia, Q.~Du, M.~Gunzburger, and R.~Lehoucq.
\newblock Nonlocal convection-diffusion problems on bounded domains and
  finite-range jump processes.
\newblock {\em Computational Methods in Applied Mathematics}, 29:71--103, 2017.

\bibitem{DElia2018-P}
M.~D'Elia, D.~Littlewood, P.~Bochev, and M.~Perego.
\newblock An optimization-based coupling strategy for local and nonlocal
  elasticity problems.
\newblock 2018.
\newblock Presented at the 13th World Congress on Computational Mechanics (WCCM
  XIII), July 2018, New York, NY, USA.

\bibitem{Du2012}
Q.~Du, M.~Gunzburger, R.~Lehoucq, and K.~Zhou.
\newblock Analysis and approximation of nonlocal diffusion problems with volume
  constraints.
\newblock {\em SIAM Review}, 54(4):667--696, 2012.

\bibitem{Du2013}
Q.~Du, M.~Gunzburger, R.~B. Lehoucq, and K.~Zhou.
\newblock A nonlocal vector calculus, nonlocal volume--constrained problems,
  and nonlocal balance laws.
\newblock {\em Mathematical Models and Methods in Applied Sciences},
  23(03):493--540, 2013.

\bibitem{DuZhangZheng}
Qiang Du, Jiwei Zhang, and Chunxiong Zheng.
\newblock On uniform second order nonlocal approximations to linear two-point
  boundary value problems.
\newblock preprint.

\bibitem{Fife2003}
P.~Fife.
\newblock {\em Some nonclassical trends in parabolic and parabolic-like
  evolutions}, chapter Vehicular Ad Hoc Networks, pages 153--191.
\newblock Springer-Verlag, New York, 2003.

\bibitem{Gilboa2007}
G.~Gilboa and S.~Osher.
\newblock Nonlocal linear image regularization and supervised segmentation.
\newblock {\em Multiscale Model. Simul.}, 6:595--630, 2007.

\bibitem{Gilboa2008}
G.~Gilboa and S.~Osher.
\newblock Nonlocal operators with applications to image processing.
\newblock {\em Multiscale Model. Simul.}, 7:1005--1028, 2008.

\bibitem{Ha2011}
Youn~Doh Ha and Florin Bobaru.
\newblock Characteristics of dynamic brittle fracture captured with
  peridynamics.
\newblock {\em Engineering Fracture Mechanics}, 78(6):1156--1168, 2011.

\bibitem{Littlewood2010}
D.~Littlewood.
\newblock Simulation of dynamic fracture using peridynamics, finite element
  modeling, and contact.
\newblock In {\em Proceedings of the ASME 2010 International Mechanical
  Engineering Congress and Exposition, Vancouver, British Columbia, Canada},
  2010.

\bibitem{Lou2010}
Y.~Lou, X.~Zhang, S.~Osher, and A.~Bertozzi.
\newblock Image recovery via nonlocal operators.
\newblock {\em Journal of Scientific Computing}, 42:185--197, 2010.

\bibitem{Meerschaert2012}
M.M. Meerschaert and A.~Sikorskii.
\newblock {\em Stochastic models for fractional calculus}.
\newblock Studies in mathematics, Gruyter, 2012.

\bibitem{DuMengesha}
Tadele Mengesha and Qiang Du.
\newblock The bond-based peridynamic system with {D}irichlet-type volume
  constraint.
\newblock {\em Proc. Roy. Soc. Edinburgh Sect. A}, 144(1):161--186, 2014.

\bibitem{MeKl00}
R.~Metzler and J.~Klafter.
\newblock The random walk's guide to anomalous diffusion: a fractional dynamics
  approach.
\newblock {\em Physics Reports}, 339(1):1--77, 2000.

\bibitem{Schekochihin2008}
A.A. Schekochihin, S.C. Cowley, and T.A. Yousef.
\newblock Mhd turbulence: Nonlocal, anisotropic, nonuniversal?
\newblock In {\em In IUTAM Symposium on computational physics and new
  perspectives in turbulence}, pages 347--354. Springer, Dordrecht, 2008.

\bibitem{Schumer2003}
R.~Schumer, D.A. Benson, M.M. Meerschaert, and B.~Baeumer.
\newblock Multiscaling fractional advection-dispersion equations and their
  solutions.
\newblock {\em Water Resources Research}, 39(1):1022--1032, 2003.

\bibitem{Schumer2001}
R.~Schumer, D.A. Benson, M.M. Meerschaert, and S.W. Wheatcraft.
\newblock Eulerian derivation of the fractional advection-dispersion equation.
\newblock {\em Journal of Contaminant Hydrology}, 48:69--88, 2001.

\bibitem{Silling2000}
S.A. Silling.
\newblock Reformulation of elasticity theory for discontinuities and long-range
  forces.
\newblock {\em Journal of the Mechanics and Physics of Solids}, 48:175--209,
  2000.

\bibitem{Silling2010}
Stewart~A Silling.
\newblock Linearized theory of peridynamic states.
\newblock {\em Journal of Elasticity}, 99(1):85--111, 2010.

\bibitem{Tao2017}
Yunzhe Tao, Xiaochuan Tian, and Qiang Du.
\newblock Nonlocal diffusion and peridynamic models with {N}eumann type
  constraints and their numerical approximations.
\newblock {\em Applied Mathematics and Computation}, 305:282--298, 2017.

\bibitem{tian2013analysis}
Xiaochuan Tian and Qiang Du.
\newblock Analysis and comparison of different approximations to nonlocal
  diffusion and linear peridynamic equations.
\newblock {\em SIAM Journal on Numerical Analysis}, 51(6):3458--3482, 2013.

\bibitem{Tian2014}
Xiaochuan Tian and Qiang Du.
\newblock Asymptotically compatible schemes and applications to robust
  discretization of nonlocal models.
\newblock {\em SIAM Journal on Numerical Analysis}, 52:1641--1665, 2014.

\bibitem{YuNeumann2019}
Huaiqian You, Xin~Yang Lu, Nathaniel Trask, and Yue Yu.
\newblock A {N}eumann-type boundary condition for nonlocal problems.
\newblock preprint.

\end{thebibliography}
\end{document}